\numberwithin{figure}{section}
\theoremstyle{plain}
\newtheorem{thm}{\protect\theoremname}[section]
\theoremstyle{remark}
\newtheorem*{acknowledgement*}{\protect\acknowledgementname}
\theoremstyle{plain}
\theoremstyle{plain}
\newtheorem{lem}[thm]{\protect\lemmaname}
\theoremstyle{plain}
\newtheorem{prop}[thm]{\protect\propositionname}
\theoremstyle{remark}
\theoremstyle{plain}
\theoremstyle{remark}
\date{}
\let\originalleft\left
\let\originalright\right
\renewcommand{\left}{\mathopen{}\mathclose\bgroup\originalleft}
\renewcommand{\right}{\aftergroup\egroup\originalright}
\providecommand{\acknowledgementname}{Acknowledgement}
\providecommand{\factname}{Fact}
\providecommand{\lemmaname}{Lemma}
\providecommand{\propositionname}{Proposition}
\providecommand{\questionname}{Question}
\providecommand{\remarkname}{Remark}
\providecommand{\theoremname}{Theorem}
\providecommand{\defnname}{Definition}
\begin{document}
\global\long\def\RR{\mathbb{R}}%

\global\long\def\CC{\mathbb{C}}%

\global\long\def\ZZ{\mathbb{Z}}%

\global\long\def\NN{\mathbb{N}}%

\global\long\def\QQ{\mathbb{Q}}%

\global\long\def\TT{\mathbb{T}}%

\global\long\def\FF{\mathbb{F}}%

\global\long\def\vphi{\varphi}%

\global\long\def\sub{\subseteq}%

\global\long\def\one{\mathbbm1}%

\global\long\def\vol#1{\text{Vol\ensuremath{\left(#1\right)}}}%

\global\long\def\epi#1{\text{epi}\left\{  #1\right\}  }%

\global\long\def\sp{{\rm sp}}%

\global\long\def\K{\mathcal{K}}%

\global\long\def\A{\mathcal{A}}%

\global\long\def\L{\mathcal{L}}%
\global\long\def\P{\mathcal{P}}%

\global\long\def\W{\mathcal{W}}%

\global\long\def\iprod#1#2{\langle#1,\,#2\rangle}%

\global\long\def\conv{{\rm Conv}}%

\global\long\def\eps{\varepsilon}%

\global\long\def\norm#1{\left\Vert #1\right\Vert }%

\global\long\def\supp#1{{\rm supp}\left(#1\right)}%

\global\long\def\flr#1{\left\lfloor #1\right\rfloor }%

\global\long\def\ceil#1{\left\lceil #1\right\rceil }%

\global\long\def\hseg#1{\left\llbracket #1\right\rrbracket }%

\global\long\def\EE{\mathbb{E}}%
$ $

\global\long\def\RR{\mathbb{R}}%

\global\long\def\ZZ{\mathbb{Z}}%

\global\long\def\ll{\preceq}%

\global\long\def\bp#1{\big(#1\big)}%

\global\long\def\Bp#1{\Big(#1\Big)}%
\global\long\def\supp#1{{\rm supp}\left(#1\right)}%

\global\long\def\card#1{{\rm Card}\left(#1\right)}%

\global\long\def\gge{\succeq}%

\global\long\def\gg{\succ}%

\title{A remark on discrete Brunn-Minkowski type inequalities via transportation of measure}
\author{Boaz A. Slomka}
\address{Department of Mathematics, the Open University of Israel, Ra'anana
	4353701 Israel}
\email{slomka@openu.ac.il}
\begin{abstract}
	We give an alternative proof  for discrete Brunn-Minkowski type inequalities, recently obtained by Halikias, Klartag and the author. This proof also implies somewhat stronger weighted versions of these inequalities. 
	Our approach generalizes  ideas of Gozlan, Roberto, Samson and Tetali from the theory of measure transportation and provides new displacement convexity of entropy type inequalities for the lattice point enumerator.\\ \\
\end{abstract}
\maketitle
\section{Introduction}
In recent years, there has been a growing interest in finding  discrete versions of various results related to  convexity  theory, see e.g., \cite{AHZ17,GG01,Gozlan14,Gozlan2019,Green2020,HLY21,INZ,Klartag2019,MM2020,Matolcsi2020,Melbourne2020, Ollivier-Villani,RYZ17}. 
The aim of this note is to give a transport proof for the discrete Brunn-Minkowski type inequalities from \cite{HKS2020}, by extending the results in \cite{Gozlan2019}.In particular, we establish  new entropic versions of these inequalities. 

\subsection{Discrete Brunn-Minkowski inequalities}
We say that an operation $T:\ZZ^n\times\ZZ^n\to\ZZ^n$ {\em admits a Brunn-Minkowski inequality} if for all functions  $f,g,h,k:\ZZ^n\to[0,\infty)$ satisfying  
\begin{equation}\label{DBM_assumption}
f(x)g(y)\le h(T(x,y))k(x+y-T(x,y))\quad\forall x,y\in\ZZ^n, 
\end{equation}
it follows that 
\begin{equation}\label{eq:DBM_conclusion}
\Big(\sum_{x\in\ZZ^n}f(x)\Big) \Big(\sum_{x\in\ZZ^n}g(x)\Big)\le  \Big(\sum_{x\in\ZZ^n}h(x)\Big) \Big(\sum_{x\in\ZZ^n}k(x)\Big).
\end{equation}

One example for such an operation is  $T(x,y)=x\wedge y=\left(\min\left(x_{1},y_{1}\right),\dots,\min\left(x_{n},y_{n}\right)\right)$ which is due to the four functions theorem of Ahlswede and Daykin \cite{Ahlswede1978}. In this case, we have  $x+y-x\wedge y=x\vee y=\left(\max\left(x_{1},y_{1}\right),\dots,\max\left(x_{n},y_{n}\right)\right)$. 
Another example for such an operation is due to the discrete Brunn-Minkowski  inequality of Klartag and Lehec \cite[Theorem 1.4]{Klartag2019}, which  corresponds to $T(x,y)=\flr{ (x+y)/2}$, where $x+y-T(x,y)=\ceil{(x+y)/2}$, 
$\flr x=\left(\flr{x_{1}},\dots\flr{x_{n}}\right)$,  and $\ceil x=\left(\ceil{x_{1}},\dots,\ceil{x_{n}}\right)$. Here $\lfloor r\rfloor = \max \{ m \in \ZZ \, ; \, m \leq r \}$ is the lower integer part of $r\in\RR$ and $\lceil r\rceil=-\lfloor-r\rfloor$ the upper integer part. \smallskip

It was Gozlan, Roberto, Samson and Tetali \cite{Gozlan2019} who have first linked 
between the four functions theorem of Ahlswede and Daykin and the discrete Brunn-Minkowksi inequality of Klartag and Lehec. In their paper, they provided alternative proofs for these results which are based on  ideas from the theory of measure transportation.\smallskip

Recently, a unified elementary proof for the two aforementioned results was given in \cite{HKS2020}. This proof applies to all operations $T:\ZZ^n\times\ZZ^n\to\ZZ^n$ sharing two common properties:
\begin{enumerate}[labelindent=\parindent,leftmargin={*},label=(P\arabic*),align=left]
	\item \label{enu:TEQ }{\em Translation equivariance}: $T\left(x+z,y+z\right)=T\left(x,y\right)+z$ for all $z\in\ZZ^n$.
\item\label{enu:Knothe} {\em Monotonicity in the sense of Knothe}: there exists a decomposition of $\ZZ^n$ into a direct sum of groups $\ZZ^n = G_1 \times \dots \times G_k$ such that  for each $i\in\{1,\dots,k\}$:\smallskip
\begin{enumerate}
	\item[(i)] $T_i:(G_1\times\dots\times G_i)\times(G_1\times\dots\times G_i)\to G_i$ where $T = (T_1,\ldots,T_k)$. In other words, $T_i(x,y)$ depends only on the first $i$ coordinates of its  arguments $x,y \in G_1 \times \dots \times G_k$, so that $T$ is triangular.\smallskip
	\item[(ii)] There exists a total additive ordering $\ll_i$ on $G_i$ such that $T^{(a,b)}_i:G_i\times G_i \to G_i$ defined  by $T_i^{(a,b)}(x,y)=T_i\big((a,x),(b,y)\big)$ for $a,b\in G_1\times\dots\times G_{i-1}$ satisfies
	$$ x_1\ll_i x_2, \ y_1 \ll_i y_2 \qquad \Longrightarrow \qquad  T_i^{(a,b)}(x_1,y_1)\ll_i T_i^{(a,b)}(x_2,y_2) $$
	for all $a,b \in G_1\times\dots\times G_{i-1}$ and $x_1,x_2, y_1, y_2 \in G_i$.
\end{enumerate}
\end{enumerate}
Recall that a total ordering $\preceq$ on an abelian group $G\approx\ZZ^l$ is a binary relation which is reflexive, anti-symmetric and transitive, such that for any distinct $x,y$, either $x\preceq y$ or else $y\preceq x$. An ordering $\preceq$ is additive if for all $x,y,z$, we have $x\ll y\implies x+z\ll y+z$.\smallskip 

Examples for  additive, total orderings on  $\ZZ^{n}$ (or on $\RR^n$) are the standard lexicographic order relation and invertible linear images  thereof.  

\begin{thm}[{\cite[Theorem 1.3]{HKS2020}}]\label{thm:DBM}
Every translation equivariant operation  $T:\ZZ^n\times\ZZ^n\to\ZZ^n$ which is monotone in the sense of Knothe admits a Brunn-Minkowski inequality.
\end{thm}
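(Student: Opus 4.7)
The strategy is transport-theoretic, extending Gozlan--Roberto--Samson--Tetali. Normalize so that $H:=\sum h = K:=\sum k = 1$, set $F:=\sum f$, $G:=\sum g$, and write $\mu_f:=f/F$, $\mu_g:=g/G$ for the induced probability densities on $\ZZ^n$. Taking logarithms in \eqref{DBM_assumption}, integrating against an arbitrary coupling $\pi$ of $\mu_f$ and $\mu_g$, and applying Gibbs' inequality to the two pushforwards $\pi_T:=T_*\pi$ and $\pi_S:=S_*\pi$ (with $S(x,y):=x+y-T(x,y)$), one obtains after rearrangement
\[
\log(FG)\;\le\;[H(\mu_f)+H(\mu_g)]\;-\;[H(\pi_T)+H(\pi_S)],
\]
with $H$ denoting Shannon entropy. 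Consequently the target inequality \eqref{eq:DBM_conclusion} would follow as soon as, for every pair $\mu_f,\mu_g$, we can exhibit a coupling $\pi$ satisfying
\[
H(\pi_T)+H(\pi_S)\;\ge\;H(\mu_f)+H(\mu_g).\qquad(\star)
\]

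\textbf{Base case.} I would build $\pi$ by induction on the number of factors $k$ in the decomposition $\ZZ^n=G_1\times\cdots\times G_k$ guaranteed by (P2). For $k=1$, the factor $G_1$ carries a total additive order $\preceq$, and (P1)--(P2) force $T(x,y)=x+\tau(y-x)$ with $\tau$ non-decreasing and $\tau(t)-\tau(t-1)\in\{0,1\}$. Take $\pi$ to be the comonotone (quantile) coupling of $\mu_f$ with $\mu_g$. A crucial observation is that the sum $x+y=T(x,y)+S(x,y)$ is strictly increasing along the totally ordered support chain of $\pi$ (using additivity of $\preceq$), so the map $(x,y)\mapsto(T(x,y),S(x,y))$ is injective on $\supp{\pi}$; hence $H(\pi)=H(\pi_{(T,S)})$, and $(\star)$ is equivalent to the mutual-information comparison $I(T;S)\ge I(X;Y)$ for $(X,Y)\sim\pi$. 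I would verify this comparison by a case analysis that traces each flat-versus-unit-slope step of $\tau$ through the monotone path.

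\textbf{Induction step.} Triangularity of $T$ (property (P2)(i)) allows us to build $\pi$ coordinate by coordinate. Couple the $G_1$-marginals of $\mu_f,\mu_g$ via the base case to obtain $\pi^{(1)}$ on $G_1\times G_1$. Conditional on $(X_1,Y_1)=(a,b)\in\supp{\pi^{(1)}}$, the restricted operations $T_i^{(a,b)}$ for $i\ge 2$ inherit properties (P1)--(P2) on $G_2\times\cdots\times G_k$, so the induction hypothesis provides conditional couplings $\pi^{(a,b)}$ of the corresponding conditional measures of $\mu_f$ and $\mu_g$. Assembling these with $\pi^{(1)}$ yields a coupling $\pi$ of $\mu_f$ with $\mu_g$. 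The chain rule for Shannon entropy, combined with ``conditioning reduces entropy'', aggregates the $G_1$-instance of $(\star)$ with the conditional $(G_2\times\cdots\times G_k)$-instances into the full inequality.

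\textbf{Main obstacle.} The one-dimensional comparison $I(T;S)\ge I(X;Y)$ underlying the base case is where all the structural content lives. Both mutual informations pertain to comonotone couplings, but the map $(X,Y)\mapsto(T,S)$ generally collapses distinct pairs -- precisely at the flat steps of $\tau$ -- and one must show that the resulting mass concentration is compensated by the extra support that $(T,S)$ acquires elsewhere. I would expect to prove this by expressing each mutual information as a telescoping sum over the increments of the monotone path and matching the contributions of flat versus unit-slope steps of $\tau$ term by term; the examples $T(x,y)=x\wedge y$ and $T(x,y)=\flr{(x+y)/2}$ should both fall out as special cases.
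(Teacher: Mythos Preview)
Your global architecture---reduce \eqref{eq:DBM_conclusion} by Gibbs/log-Laplace duality to an entropy inequality of the form $(\star)$, then establish $(\star)$ by a Knothe-type induction over the factors $G_1,\dots,G_k$---coincides with the route the paper takes (Sections~\ref{sec:Knothe}--\ref{sec:8toBM}). Your induction step via the entropy chain rule together with ``conditioning reduces entropy'' is valid and is a harmless variant of the paper's product decomposition of the quantity $P$ in Theorem~\ref{thm:Knothe_9}; both arguments exploit that $T$ and $S$ are triangular so that $T_{1:i-1},S_{1:i-1}$ are functions of $(X_{1:i-1},Y_{1:i-1})$.

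The genuine gap is the base case. Your reduction of $(\star)$ to the mutual-information comparison $I(T;S)\ge I(X;Y)$ is correct (the injectivity of $(x,y)\mapsto x+y$ on the monotone support of $\pi$ is exactly the right observation), but you have not proved that comparison: the ``telescoping over flat versus unit-slope steps of $\tau$'' remains a hope rather than an argument, and you yourself flag it as the place ``where all the structural content lives.'' The paper does not attack this comparison directly. Instead it proves the strictly stronger \emph{pointwise} bound
\[
\kappa_-\bigl(T_-(x,y)\bigr)\,\kappa_+\bigl(T_+(x,y)\bigr)\;\le\;\mu(x)\,\nu(y)\qquad\text{for every }(x,y)\in\supp{\pi},
\]
from which $(\star)$ follows by Jensen. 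The mechanism is a fiber analysis (Lemmas~\ref{lem:10}--\ref{lem:12}): for each value $a$, the preimage $S_\pm(a)=\{(x,y)\in\supp{\pi}:T_\pm(x,y)=a\}$ has cardinality at most $2$, and when it equals $2$ the two points differ by a single unit step in one coordinate; moreover the fibers $S_-(T_-(x,y))$ and $S_+(T_+(x,y))$ are ``aligned'' in a way that, after a short case split, forces either $\kappa_-\le\mu(x)$ and $\kappa_+\le\nu(y)$ or the symmetric variant. This structural input is what replaces your missing step, and it is not clear that a purely information-theoretic telescoping argument would recover it without essentially redoing this fiber analysis.
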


In addition to the four functions theorem and the Brunn-Minkowski inequality of Klartag and Lehec, Theorem \ref{thm:DBM} implies various other inequalities, some of which are related to works of  Ollivier and Villani \cite{Ollivier-Villani}, Iglesias, Yepes Nicol\'as and Zvavitch \cite{INZ}, and Cordero-Erausquin and Maurey \cite{Cordero-Erausquin2017}. For a more detailed account of these implications, see \cite{HKS2020}.\smallskip

Our first main result is the following extension of Theorem \ref{thm:DBM}:
\begin{thm}\label{thm:strongDBM}
	Let $\alpha,\beta,\gamma,\delta>0$ such that $\max\{\alpha,\beta\}\le\min\{\gamma,\delta\}$. Let $T:\ZZ^{n}\times\ZZ^{n}\rightarrow\ZZ^{n}$ satisfy properties \ref{enu:TEQ } and \ref{enu:Knothe} and suppose that
	$f,g,h,k:\ZZ^{n}\to[0,\infty)$ satisfy
	\[
	f^{\alpha}\left(x\right)g^{\beta}\left(y\right)\le h^\gamma\left(T\left(x,y\right)\right)k^\delta\left(x+y-T\left(x,y\right)\right) \qquad \forall x,y\in\ZZ^{n}.
	\]
	Then
	\[
	\Bp{\sum_{x\in\ZZ^{n}}f\left(x\right)}^\alpha\Bp{\sum_{x\in\ZZ^{n}}g\left(x\right)}^{\beta}\le\Bp{\sum_{x\in\ZZ^{n}}h\left(x\right)}^\gamma\Bp{\sum_{x\in\ZZ^{n}}k\left(x\right)}^\delta.
	\]
\end{thm}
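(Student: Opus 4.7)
\emph{Strategy.} My plan is to reduce Theorem \ref{thm:strongDBM} to the unit-exponent statement Theorem \ref{thm:DBM} by a single pointwise rescaling of the four functions. The condition $\max\{\alpha,\beta\}\le\min\{\gamma,\delta\}$ is exactly what guarantees that the scalar inequalities relating $\sum_i a_i^{r}$ to $\bp{\sum_i a_i}^{r}$ which arise during the reduction all point in the useful direction.

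\emph{Reduction to Theorem \ref{thm:DBM}.} I would set $p:=\max\{\alpha,\beta\}$, so that $\alpha/p,\,\beta/p\in(0,1]$ and $\gamma/p,\,\delta/p\ge 1$. Raising the pointwise hypothesis to the power $1/p$, the four functions $F:=f^{\alpha/p}$, $G:=g^{\beta/p}$, $H:=h^{\gamma/p}$, $K:=k^{\delta/p}$ satisfy
\[
F(x)\,G(y)\ \le\ H\bp{T(x,y)}\,K\bp{x+y-T(x,y)}\qquad \forall x,y\in\ZZ^n.
\]
Since $T$ is assumed to satisfy properties \ref{enu:TEQ } and \ref{enu:Knothe}, Theorem \ref{thm:DBM} applies to $F,G,H,K$ and yields
\[
\Bp{\sum_{x\in\ZZ^n}F(x)}\Bp{\sum_{x\in\ZZ^n}G(x)}\ \le\ \Bp{\sum_{x\in\ZZ^n}H(x)}\Bp{\sum_{x\in\ZZ^n}K(x)}.
\]

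\emph{Conclusion via scalar power inequalities.} To convert this into the conclusion of Theorem \ref{thm:strongDBM}, I would invoke the elementary inequalities $\bp{\sum_i a_i}^{r}\le \sum_i a_i^{r}$ for $r\in(0,1]$ (subadditivity of $t\mapsto t^{r}$ on $[0,\infty)$) and $\sum_i a_i^{r}\le \bp{\sum_i a_i}^{r}$ for $r\ge 1$ (superadditivity), both for nonnegative $a_i$. Applying the first with $r=\alpha/p$ to $f$ and $r=\beta/p$ to $g$, and the second with $r=\gamma/p$ to $h$ and $r=\delta/p$ to $k$, I obtain the chain
\[
\Bp{\sum f}^{\alpha/p}\Bp{\sum g}^{\beta/p}\,\le\,\Bp{\sum F}\Bp{\sum G}\,\le\,\Bp{\sum H}\Bp{\sum K}\,\le\,\Bp{\sum h}^{\gamma/p}\Bp{\sum k}^{\delta/p},
\]
after which raising both sides to the $p$-th power produces precisely the conclusion of Theorem \ref{thm:strongDBM}.

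\emph{Main obstacle.} I do not anticipate a substantive obstacle: the argument is a short algebraic manipulation whose only delicate point is bookkeeping, namely checking that the hypothesis $\max\{\alpha,\beta\}\le\min\{\gamma,\delta\}$ places each of the four rescaled exponents on the correct side of $1$ so that the four scalar power inequalities chain in the useful direction. The main interest of Theorem \ref{thm:strongDBM} therefore seems to be not logical strength beyond Theorem \ref{thm:DBM} (the two are equivalent by this reduction) but methodological: a direct transportation proof should produce the weighted inequality with arbitrary exponents in one stroke. A more ambitious plan, consistent with the paper's announced transportation-of-measure approach, is to prove Theorem \ref{thm:strongDBM} from scratch by constructing a Knothe-type transport on $\ZZ^n$ adapted to $T$ and applying Jensen's inequality with weights matching $\alpha,\beta,\gamma,\delta$; this route bypasses Theorem \ref{thm:DBM} as an input and presumably recovers both theorems from a single transport argument.
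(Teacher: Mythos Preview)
Your reduction is correct: with $p=\max\{\alpha,\beta\}$ the four rescaled functions $F=f^{\alpha/p}$, $G=g^{\beta/p}$, $H=h^{\gamma/p}$, $K=k^{\delta/p}$ satisfy the unweighted hypothesis, Theorem~\ref{thm:DBM} applies, and the $\ell^p$ inequalities $\bp{\sum a_i}^r\le\sum a_i^r$ for $r\in(0,1]$ and $\sum a_i^r\le\bp{\sum a_i}^r$ for $r\ge 1$ (valid for countable nonnegative sums, in $[0,\infty]$) chain exactly as you wrote. So Theorem~\ref{thm:strongDBM} is indeed a formal consequence of Theorem~\ref{thm:DBM}.

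The paper, however, does not argue this way. It proves Theorem~\ref{thm:strongDBM} directly from the weighted entropy inequality Theorem~\ref{thm:Knothe} via the log--Laplace duality $\log\int e^{\varphi}\,dm_n=\sup_\nu\{\int\varphi\,d\nu-H(\nu|m_n)\}$, precisely along the lines of the ``more ambitious plan'' you sketch at the end: one integrates the logarithm of the pointwise hypothesis against the Knothe coupling $\pi$, applies Theorem~\ref{thm:Knothe} to absorb the entropy terms, and optimises over $\mu,\nu$. Your route is shorter and more elementary, and it makes explicit that the weighted and unweighted Brunn--Minkowski statements are equivalent; what it gives up is exactly what the paper is after, namely a self-contained transportation proof that does not take Theorem~\ref{thm:DBM} as input and that yields the displacement-convexity-of-entropy inequality (Theorem~\ref{thm:Knothe}) as a result of independent interest.
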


 Note that if an operation $T:\ZZ^n\times\ZZ^n\to\ZZ^n$  satisfies properties \ref{enu:TEQ } and \ref{enu:Knothe}, then so does the operation $x+y-T(x,y)$. In the sequel, we shall say that  $T_\pm:\ZZ^n\times\ZZ^n\to\ZZ^n$ are  {\it complementing operations} if they satisfy the above relation, i.e.,  $T_-(x,y)+T_+(x,y)=x+y$.

\subsection{An entropic version}
Our approach is based on the work of Gozlan, Roberto, Samson and Tetali  \cite{Gozlan2019} who proved the next displacement convexity of entropy result for the counting measure $m$ on $\ZZ$ and the operations $T_-(x,y)=\flr{(x+y)/2}$ and  $T_+(x,y)=\ceil{ (x+y)/2}$.

For historical remarks on the   the displacement convexity of entropy property and other discrete variants of it, see e.g., \cite{Gozlan14,Gozlan2019,Ollivier-Villani} and references therein. 

\begin{thm}[{\cite[Theorem 8]{Gozlan2019}}]\label{thm:8} Suppose that $\mu_1,\mu_2$ are finitely supported probability measures on $\ZZ$. Then
	\begin{equation}
	H(\mu_1|m)+H(\mu_2|m)\ge H(\mu_-|m)+H(\mu_+|m)
	\end{equation}
where $H(\mu|m)=\sum_{x\in\ZZ}\mu(x)\log(\mu(x))$ is the relative entropy of $\mu$ with respect to $m$, and $\mu_\pm$ is the push forward of the monotone coupling $\pi$ between $\mu_1$ and $\mu_2$ by $T_\pm$.
\end{thm}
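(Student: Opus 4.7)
The plan is to exploit two features of the monotone coupling $\pi$ of $\mu_1,\mu_2$. First, on $\mathrm{supp}(\pi)$ the sums $x+y$ are pairwise distinct: if $(x,y),(x',y')\in\mathrm{supp}(\pi)$ with $x\le x'$, monotonicity forces $y\le y'$, so $x+y=x'+y'$ forces $(x,y)=(x',y')$. Consequently the map $\Phi:=(T_-,T_+)$ is injective on $\mathrm{supp}(\pi)$, because $z_-+z_+$ recovers $x+y$ and the distinct-sums property then recovers $(x,y)$. Writing $\tilde\pi:=\Phi_{\#}\pi$, the injectivity yields the identity $H(\tilde\pi\,|\,m\otimes m)=H(\pi\,|\,m\otimes m)$, while $\tilde\pi$ evidently has marginals $\mu_-$ and $\mu_+$.

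Next I would apply the chain rule $H(\gamma\,|\,m\otimes m)=H(\gamma_1\,|\,m)+H(\gamma_2\,|\,m)+H(\gamma\,|\,\gamma_1\otimes\gamma_2)$ to both $\pi$ (a coupling of $\mu_1,\mu_2$) and $\tilde\pi$ (a coupling of $\mu_-,\mu_+$). Using the equality of joint entropies from the preceding step, the target inequality is equivalent to
\[
H(\tilde\pi\,|\,\mu_-\otimes\mu_+)\ge H(\pi\,|\,\mu_1\otimes\mu_2),
\]
and transporting through the bijection $\Phi$ this rewrites as
\[
\sum_{(x,y)\in\mathrm{supp}(\pi)}\pi(x,y)\log\frac{\mu_1(x)\mu_2(y)}{\mu_-(T_-(x,y))\mu_+(T_+(x,y))}\ge0.
\]
By Jensen applied to the concave function $\log$ (in the form $\sum \pi\log(1/R)\ge-\log\sum\pi R$), it would suffice to establish the sharper statement
\[
\sum_{(x,y)\in\mathrm{supp}(\pi)}\pi(x,y)\cdot\frac{\mu_-(T_-(x,y))\mu_+(T_+(x,y))}{\mu_1(x)\mu_2(y)}\le 1.
\]

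The main obstacle is this last inequality. My plan is to attack it via Theorem \ref{thm:DBM}, or more likely via Theorem \ref{thm:strongDBM} with a small perturbation parameter $t>0$ (so that passing to $t\to 0^+$ extracts an entropy statement from a weighted Brunn--Minkowski inequality). The naive choice of test functions $f=\mu_1,\,g=\mu_2,\,h=\mu_-,\,k=\mu_+$ fails because the pointwise bound $\mu_1(x)\mu_2(y)\le\mu_-(T_-(x,y))\mu_+(T_+(x,y))$ can be violated off $\mathrm{supp}(\pi)$; easy counterexamples appear already when $\mu_1$ is a Dirac mass. A workable modification should truncate $f,g$ (by indicators supported on $\Phi(\mathrm{supp}(\pi))$ or a slight enlargement compatible with property \ref{enu:Knothe}) so that $f(x)g(y)=0$ outside the image of the coupling, and then exploit the bijection $\Phi$ to verify the pointwise hypothesis on the support. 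Arranging this cutoff compatibly with the Knothe decomposition and extracting the correct infinitesimal conclusion from the strong form of BM is where the bulk of the work will lie.
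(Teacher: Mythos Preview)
Your reduction is correct and, once you invoke Jensen, lands on precisely the inequality the paper isolates: with $\alpha=\beta=\gamma=\delta=1$ your displayed sum $\le 1$ is the one--dimensional case of Theorem~\ref{thm:Knothe_9} (equivalently Proposition~\ref{prop:9}), and Section~\ref{sec:9to8} deduces the entropy statement from it by the same Jensen step. The detour through injectivity of $\Phi=(T_-,T_+)$ on $\supp{\pi}$ and the chain rule is valid but unnecessary; the paper goes straight from $P\le 1$ to the entropy inequality by expanding $\sum_{(x,y)}\pi(x,y)\log\kappa_\pm(T_\pm(x,y))=\sum_{z}\kappa_\pm(z)\log\kappa_\pm(z)$ via the pushforward definition.

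The genuine gap is your plan for proving
\[
\sum_{(x,y)\in\supp{\pi}}\pi(x,y)\,\frac{\mu_-(T_-(x,y))\,\mu_+(T_+(x,y))}{\mu_1(x)\,\mu_2(y)}\le 1.
\]
Attacking this via Theorem~\ref{thm:strongDBM} runs the paper's logic backwards: here Theorem~\ref{thm:strongDBM} is \emph{derived from} Theorem~\ref{thm:Knothe}, which in turn is derived from exactly the inequality above. Independently of circularity, the truncation idea faces a structural obstruction: the hypothesis of Theorem~\ref{thm:strongDBM} is a pointwise condition on all of $\ZZ\times\ZZ$, while $\mu_\pm$ are built from $\pi$ and carry no useful relation to $\mu_1(x)\mu_2(y)$ off $\supp{\pi}$; there is no cutoff that simultaneously kills the bad pairs and keeps the conclusion nontrivial, nor any evident way a limit $t\to 0^+$ in the exponents would produce the coupling--specific quantity $P$. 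The paper instead proves the displayed inequality directly --- in fact the stronger \emph{pointwise} bound $\kappa_-(T_-(x,y))\,\kappa_+(T_+(x,y))\le \mu_1(x)\,\mu_2(y)$ for every $(x,y)\in\supp{\pi}$ --- by a short combinatorial analysis of the fibres $S_\pm(a)=\{(x,y)\in\supp{\pi}:T_\pm(x,y)=a\}$ (Lemmas~\ref{lem:10}--\ref{lem:12}): each fibre has at most two elements, and their alignment forces either $\kappa_-(a)\le\mu_1(x_0)$ and $\kappa_+(a')\le\mu_2(y_0)$ or the symmetric variant. That combinatorial step is the missing engine in your proposal.
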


Denote the counting measure on $\ZZ^n$ by $m_n$. The relative entropy of a probability measure $\mu$ on $\ZZ^n$ with respect to $m_n$ is given by $H(\mu|m_n)=\sum_{x\in\ZZ^n}\mu(x)\log(\mu(x))$.\smallskip

Our second main result is the following generalization of Theorem \ref{thm:8}:

\begin{thm}\label{thm:Knothe} Let $T_\pm:\ZZ^n\times\ZZ^n\to\ZZ^n$ be complementing operations satisfying properties \ref{enu:TEQ } and \ref{enu:Knothe}. Suppose that $\mu$ and $\nu$ are finitely supported probability measures on $\ZZ^n$. Then there exists a coupling $\pi$ between $\mu$ and $\nu$ such that, denoting by $\kappa_\pm=\pi\circ{T_\pm}^{-1}$  the push forward of $\pi$ by $T_\pm$, we have
	\begin{equation}\label{eq:9}
H(\mu|m_n)+H(\nu|m_n)	\ge H(\kappa_-|m_n)+H(\kappa_+|m_n).
	\end{equation}
\end{thm}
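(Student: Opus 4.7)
The plan is to construct the coupling $\pi$ recursively along the Knothe decomposition $\ZZ^n=G_1\times\cdots\times G_k$, reducing by induction on $k$ to the case of a single factor $G$ equipped with a total additive ordering $\preceq$. Granted the one-factor version of \eqref{eq:9} as a base case, I would build $\pi$ in two stages: first take a coupling $\pi^{(1)}$ of the $G_1$-marginals $\mu^{(1)},\nu^{(1)}$ (via the one-factor case), then for each $(x_1,y_1)\in\supp(\pi^{(1)})$ take the inductively produced coupling $\pi^{(x_1,y_1)}$ of the conditional measures $\mu_{x_1},\nu_{y_1}$ on $G_2\times\cdots\times G_k$. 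The triangularity \ref{enu:Knothe}(i) ensures that the $G_1$-marginal of $\kappa_\pm$ depends only on $\pi^{(1)}$ via $T_{\pm,1}$. Combining the chain rule $H(\mu|m_n)=H(\mu^{(1)}|m_{G_1})+\EE_{x_1\sim\mu^{(1)}}H(\mu_{x_1}|m_{G_2\times\cdots\times G_k})$ with the convexity of $t\log t$, which bounds $H(\kappa_\pm|m_n)$ from above by the $G_1$-marginal entropy plus the $\pi^{(1)}$-average of conditional entropies, the induction closes provided the one-factor lemma holds for $\pi^{(1)}$ on $G_1$.

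In the one-factor case, $\mu,\nu$ are finitely supported probability measures on an abelian group $G$ with a total additive ordering $\preceq$, and $T_\pm$ are translation equivariant and monotone in each argument with respect to $\preceq$. I would take $\pi$ to be the \emph{monotone coupling}: parametrize by a uniform quantile $U\in[0,1]$ and let $\pi$ be the law of $(F_\mu^{-1}(U),F_\nu^{-1}(U))$, with quantile functions defined via $\preceq$. By \ref{enu:Knothe}(ii), the random variables $Z_\pm:=T_\pm(F_\mu^{-1}(U),F_\nu^{-1}(U))$ are $\preceq$-nondecreasing staircase functions of $U$, so $\kappa_\pm=\pi\circ T_\pm^{-1}$ are their laws and admit a simple piecewise-constant description.

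The crux is then the weighted one-factor entropy inequality
\[
\alpha H(\mu|m)+\beta H(\nu|m)\ge\gamma H(\kappa_-|m)+\delta H(\kappa_+|m),
\]
which I would establish by extending the combinatorial argument used by Gozlan, Roberto, Samson and Tetali \cite{Gozlan2019} for Theorem \ref{thm:8} (the unweighted case with $T_\pm(x,y)=\lfloor(x+y)/2\rfloor,\lceil(x+y)/2\rceil$). Writing each relative entropy as $\sum p\log p$ over the staircase and grouping right-hand terms according to the fibers $\kappa_\pm(z)=\sum_{T_\pm(x,y)=z}\pi(x,y)$, the log-sum inequality and convexity of $t\log t$ yield per-fiber bounds. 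The hypothesis $\max\{\alpha,\beta\}\le\min\{\gamma,\delta\}$ should appear precisely when apportioning the convexity gain from each such bound: it is exactly what lets the contribution at each bin on the right-hand side be covered by the corresponding bins on the left with weights $\alpha$ and $\beta$.

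The main obstacle is this weighted one-factor bound. The unweighted combinatorial pairing in \cite{Gozlan2019} is symmetric between $\mu$ and $\nu$ and between $\kappa_-$ and $\kappa_+$, and this symmetry breaks once weights are unequal; one must redistribute mass so that the convexity slack from each fiber covers the weighted discrepancy. Verifying that $\max\{\alpha,\beta\}\le\min\{\gamma,\delta\}$ is the precise sufficient condition should fall out of the resulting optimization and is the heart of the proof. Once the one-factor case is established, the inductive scheme above delivers Theorem \ref{thm:Knothe}, and Theorem \ref{thm:strongDBM} follows via the standard Legendre duality between log-partition sums and relative entropy.
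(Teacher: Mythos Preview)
Your overall architecture---monotone coupling in a single totally ordered factor, then Knothe gluing along the decomposition $G_1\times\cdots\times G_k$, with the chain rule for entropy and convexity of $t\log t$ closing the induction---matches the paper's. The inductive step you describe is correct: writing the conditional of $\kappa_\pm$ on its first coordinate as a $\pi^{(1)}$-mixture of the layer pushforwards and applying convexity gives exactly the upper bound you need.

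Where the paper differs is in the one-factor base case and in the order of operations. It does \emph{not} attack the entropy inequality directly via log-sum. Instead it proves the stronger pointwise product bound (Proposition~\ref{prop:9}, in fact inequality~\eqref{eq:12}): for every $(x,y)\in\supp(\pi)$,
\[
\frac{\kappa_-^\gamma(T_-(x,y))\,\kappa_+^\delta(T_+(x,y))}{\mu^\alpha(x)\,\nu^\beta(y)}\le 1.
\]
The combinatorics (Lemmas~\ref{lem:10}--\ref{lem:12}) show that each fiber $S_\pm(a)=\{(x,y)\in\supp(\pi):T_\pm(x,y)=a\}$ has at most two elements and is ``aligned'' in a way that forces $\kappa_-(a)\le\mu(x_0)$ and $\kappa_+(a')\le\nu(y_0)$ (or symmetric variants). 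The hypothesis $\max\{\alpha,\beta\}\le\min\{\gamma,\delta\}$ then enters only through the trivial observation that $p^\gamma\le p^\alpha$ for $p\in[0,1]$: no convexity apportioning or optimization is needed, and the weighted case is no harder than the unweighted one. This removes precisely the ``main obstacle'' you flag.

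Organizationally, the paper first proves Theorem~\ref{thm:Knothe_9} (the averaged product $P\le1$), which tensorizes \emph{exactly} over the Knothe layers---the global $P$ factors as a product of per-layer $P$'s, each $\le1$---and then applies Jensen once at the very end to obtain Theorem~\ref{thm:Knothe}. Your route through entropy at every stage is valid but loses information at each layer; passing through the product inequality is both sharper and shorter.
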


The coupling $\pi$ for which Eq. \eqref{eq:9} holds is a Knothe coupling which disintegrates into a product of monotone couplings with respect  to the decomposition $\ZZ^n=G_1\times\dots\times G_k$  given in property \ref{enu:Knothe}. The construction of this coupling is described  in  Section \ref{sec:Knothe}.\smallskip

As we shall see,  \cite{Gozlan2019}, Theorem \ref{thm:Knothe} implies Theorem \ref{thm:strongDBM} by  the same duality argument which was used in \cite{Gozlan2019} to prove \cite[Theorem 1.4]{Klartag2019}.

Theorem \ref{thm:Knothe} is an immediate consequence of  the following extension of \cite[Theorem 9]{Gozlan2019} (which was used to deduce Theorem \ref{thm:8} in the same manner):

\begin{thm}\label{thm:Knothe_9} Let $T_\pm:\ZZ^n\times\ZZ^n\to\ZZ^n$ be complementing operations satisfying properties \ref{enu:TEQ } and \ref{enu:Knothe}.  Suppose that $\mu$ and $\nu$ are finitely supported probability measures on $\ZZ^n$. Then there exists a coupling $\pi$ between $\mu$ and $\nu$ such that, denoting by $\kappa_\pm=\pi\circ{T_\pm}^{-1}$ the push forward of $\pi$ by $T_\pm$, we have
	$$
\sum_{(x,y)\in \ZZ^n\times\ZZ^n}\frac{\kappa_-(T_-(x,y))\kappa_+(T_+(x,y))}{\mu(x)\nu^(y)}\,\pi(x,y)\le 1.
	$$
\end{thm}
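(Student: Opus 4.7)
The plan is to take $\pi$ to be the \emph{Knothe coupling} adapted to the decomposition $\ZZ^{n}=G_{1}\times\dots\times G_{k}$ of property \ref{enu:Knothe}, and to verify $P\le1$ by induction on $k$.

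For the base case $k=1$, the underlying group is $\ZZ^{n}\cong\ZZ^{\ell}$ with a total additive order $\ll$, and both $T_{\pm}$ are strictly monotone as in property \ref{enu:Knothe}(ii). Take $\pi$ to be the monotone (quantile) coupling of $\mu$ and $\nu$, and enumerate its support as a staircase $(x^{(s)},y^{(s)})_{s=1}^{L}$ with both coordinate sequences non-decreasing in $s$. For each $s$, let $A_{s},B_{s},C_{s},D_{s}\subseteq\{1,\dots,L\}$ be the staircase level sets of, respectively, the $x$-coordinate at $s$, the $y$-coordinate at $s$, $T_{-}(x^{(s)},y^{(s)})$, and $T_{+}(x^{(s)},y^{(s)})$. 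Each is an interval containing $s$, with $\pi$-masses $\mu(x^{(s)})$, $\nu(y^{(s)})$, $\kappa_{-}(T_{-}(x^{(s)},y^{(s)}))$, and $\kappa_{+}(T_{+}(x^{(s)},y^{(s)}))$. The strict monotonicity in \ref{enu:Knothe}(ii), combined with the identity $T_{-}(x,y)+T_{+}(x,y)=x+y$ (from $T_{\pm}$ being complementing translation-equivariant operations), yields the structural relations
\[A_{s}\cap B_{s}=C_{s}\cap D_{s}=\{s\}\quad\text{and}\quad C_{s}\cup D_{s}\subseteq A_{s}\cup B_{s}.\]

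With these structural relations in hand, proving $P\le1$ reduces to a combinatorial inequality for sums of products of interval masses. I would proceed by case analysis on the configuration of $A_{s},B_{s},C_{s},D_{s}$ around $s$. In the generic configuration where $A_{s}$ and $B_{s}$ extend strictly to opposite sides of $s$, one of $C_{s},D_{s}$ is contained in $A_{s}$ and the other in $B_{s}$; combined with $\kappa_{\pm},\mu,\nu\le1$ and the hypothesis $\max\{\alpha,\beta\}\le\min\{\gamma,\delta\}$, this gives the pointwise bound $\kappa_{-}^{\gamma}(T_{-})\kappa_{+}^{\delta}(T_{+})/(\mu^{\alpha}(x)\nu^{\beta}(y))\le1$, so that term is at most $\pi(x^{(s)},y^{(s)})$. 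In the degenerate configurations where one of $A_{s},B_{s}$ collapses to $\{s\}$, the pointwise ratio can exceed $1$ and must be controlled globally; at least when $\nu$ is a point mass (so every $B_{s}$ is the full staircase) the sum reduces to a sub-sum of the product $(\sum_{z^{-}}\kappa_{-}(z^{-}))(\sum_{z^{+}}\kappa_{+}(z^{+}))=1$ over the image of the staircase under $(T_{-},T_{+})$, which is injective thanks to $C_{s}\cap D_{s}=\{s\}$. Handling the general degenerate situation, and incorporating the exponents $\alpha,\beta,\gamma,\delta$, requires more delicate bookkeeping and is the main technical heart of the proof.

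For the inductive step, set $H=G_{2}\times\dots\times G_{k}$. Apply the base case on $G_{1}$ to the marginals $(\mu^{(1)},\nu^{(1)})$ with the operations $T_{1,\pm}$, producing a coupling $\pi_{1}$. For each $(a,b)\in\mathrm{supp}(\pi_{1})$, the conditional operations $\tilde{T}_{\pm}^{(a,b)}$ on $H$ inherit properties \ref{enu:TEQ } and \ref{enu:Knothe}, so the inductive hypothesis yields couplings $\pi^{(a,b)}$ of $\mu(\cdot\mid a),\nu(\cdot\mid b)$. Define $\pi(x,y):=\pi_{1}(x_{1},y_{1})\pi^{(x_{1},y_{1})}(\tilde{x},\tilde{y})$. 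Factoring $P$ through this disintegration and using $\mu(x)=\mu^{(1)}(x_{1})\mu(\tilde{x}\mid x_{1})$ (and analogously for $\nu$) splits the sum as an outer sum against $\pi_{1}$ of inner sums against the $\pi^{(a,b)}$. The main obstacle here is that the conditional $\kappa_{\pm}(\cdot\mid T_{1,\pm}(x_{1},y_{1}))$ is a $\pi_{1}$-weighted mixture of the push-forwards $\tilde{\kappa}_{\pm}^{(x_{1}',y_{1}')}$ across all $(x_{1}',y_{1}')$ sharing the same $T_{1,\pm}$-value, rather than equal to $\tilde{\kappa}_{\pm}^{(x_{1},y_{1})}$ alone. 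I expect to absorb this mixture via a Jensen-type estimate using the hypothesis $\max\{\alpha,\beta\}\le\min\{\gamma,\delta\}$ to arrange the correct convexity, after which the base case controls the outer sum and the inductive hypothesis closes the inner sums.
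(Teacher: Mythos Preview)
Your overall strategy---take $\pi$ to be the Knothe coupling adapted to the decomposition and argue by induction on $k$---is exactly the paper's. The substantive gap is in your base case.

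You correctly extract the structural relations $A_s\cap B_s=C_s\cap D_s=\{s\}$ and $C_s\cup D_s\subseteq A_s\cup B_s$; these amount to the paper's preliminary lemmas showing that each fibre $S_\pm(a)=\{(x,y)\in\mathrm{supp}(\pi):T_\pm(x,y)=a\}$ has cardinality at most $2$, with the two points (when there are two) differing by $(1,0)$ or $(0,1)$. Where you go astray is in asserting that in the ``degenerate'' configurations the pointwise ratio can exceed $1$ and so must be controlled by a global argument. In fact the pointwise bound
\[
\kappa_-^{\gamma}\bigl(T_-(x,y)\bigr)\,\kappa_+^{\delta}\bigl(T_+(x,y)\bigr)\ \le\ \mu^{\alpha}(x)\,\nu^{\beta}(y)
\]
holds at \emph{every} point of $\mathrm{supp}(\pi)$, which immediately gives $P\le 1$. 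The ingredient you are missing is an \emph{alignment lemma}: whenever $|C_s|=|D_s|=2$, the two sets are forced to be aligned in the sense that each element of one differs from some element of the other by $(0,0)$ or $\pm(1,1)$; in your staircase language, $C_s$ and $D_s$ must extend to opposite sides of $s$. The paper proves this by an infinite-descent argument exploiting the finiteness of $\mathrm{supp}(\pi)$. Alignment rules out precisely the picture your degenerate case worries about (e.g.\ $A_s=\{s\}$ while $C_s=\{s-1,s\}$ and $D_s=\{s,s+1\}$ both sit inside $B_s$). With alignment in hand, a short case split---either $\min\{|C_s|,|D_s|\}=1$, or both equal $2$ and are aligned---shows that one of $\kappa_-(T_-),\kappa_+(T_+)$ is bounded by $\mu(x)$ and the other by $\nu(y)$, and the exponent hypothesis $\max\{\alpha,\beta\}\le\min\{\gamma,\delta\}$ together with all masses being at most $1$ finishes the pointwise bound. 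No global bookkeeping is needed.

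For the inductive step, the mixture concern you raise is reasonable, but the paper does not introduce a Jensen-type correction. It disintegrates $\mu,\nu,\pi,\kappa_\pm$ along $G_1\times\dots\times G_k$ and writes $P$ as an iterated sum whose innermost layer is an instance of the base case on $G_k$ for the conditional measures; the layers are then peeled off one at a time, each bounded by $1$. What makes this work without a mixture correction is exactly the \emph{pointwise} nature of the base-case inequality, not merely the summed inequality $P\le 1$.
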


The remaining of the paper is organized as follows. In Section \ref{sec:simplified_DCOE} we prove Theorem \ref{thm:Knothe} in the case where $T$ itself is monotone. In Section \ref{sec:Knothe} we use the Knothe coupling to extend the proof of Theorem \ref{thm:Knothe_9} to the general case. Sections \ref{sec:9to8} and \ref{sec:8toBM} are devoted for the derivations of Theorems \ref{thm:Knothe} and \ref{thm:strongDBM}, respectively.

\begin{acknowledgement*}
The author thanks Bo'az Klartag for fruitful conversions and for his advice and comments. The author also thanks Shiri Artstein for her remarks on the written text and the anonymous referee of the paper \cite{HKS2020} for suggesting to pursue this direction. Supported by ISF grant 784/20.
\end{acknowledgement*}

\section{The monotone case} \label{sec:simplified_DCOE}


The purpose of this section is to prove Theorem \ref{thm:Knothe_9} in the case where $T_-$ (equivalently $T_+$) itself is monotone in each of its two entries with respect to some total additive ordering $\ll$ on $\ZZ^n$. That is,
\begin{equation}\label{eq:P2'}
x_1\ll x_2, y_1\ll y_2\implies T_\pm(x_1,y_1)\ll T_\pm(x_2,y_2)
\end{equation}
This result is given below as Proposition \ref{prop:9}.


The core ideas of the proof  are drawn from the proof of \cite[Theorem \ref{thm:8}]{Gozlan2019}, which is also a particular case of the proposition. However, we also manage to simplify some of its key steps.



\subsection{The monotone coupling $\pi$ and Proposition \ref{prop:9}} Let  $G\approx\ZZ^n$ be a finitely generated group, endowed with a totally additive ordering $\ll$. Given a probability measure $\mu$ on $G$, the cumulative distribution of $\mu$ with respect to $\ll$  is defined by
$$
F_\mu(x)=\mu((-\infty,x])=\mu\{g\in G\,;\,g\ll x\}\quad\forall x\in G.
$$
Similarly, the generalized inverse of $F_\mu$ at a point $t\in(0,1)$ is given by 
$$F_\mu^{-1}(t)=\inf\{x\in G\,;\,F_\mu(x)\ge t\}.$$

Recall that a coupling between  two probability measures $\mu$ and $\nu$ on $G$ is a probability measure $\pi$ on $G\times G$ whose coordinate marginals are $\mu$ and $\nu$, i.e., for all $A,B\sub G$, we  have $\pi(A,G)=\mu(A)$ and $\pi(G,B)=\nu(B)$.

Given two  finitely supported probability measures   $\mu,\nu$ on $G$ and a random variable  $U$, uniformly distributed on $(0,1)$, we define the monotone coupling between $\mu$ and $\nu$ with respect to the ordering $\ll$ by
$$ \pi={\rm Law}(F_\mu^{-1}(U),F_\nu^{-1}(U) ).$$

One can check that the support of $\pi$ is monotone with respect to $\ll\times\ll$. That is, if $(a,b),(c,d)\in \supp{\pi}$ then either $a\ll c$ and $b\ll d$ or vice versa, $c\ll a$ and $d\ll b$. 

We remark that if $\nu$ stochastically dominates $\mu$, or the other way around, then the coupling $\pi$ is diagonal, i.e., $F_\mu^{-1}(U)\le F_\nu^{-1}(U)$ with probability $1$ or $0$. This is a particular case of Strassen's theorem \cite{Strassen1965}, which holds for partially ordered probability spaces. For more information on this subject, see  e.g., \cite{Lindvall02} and references therein. 




\begin{prop}\label{prop:9} Let $(G,\ll)$ be a finitely generated group equipped with a total additive ordering $\ll$.  Let $T_\pm:G\times G\to G$ be complementing operations satisfying  \ref{enu:TEQ } and \eqref{eq:P2'}. Suppose that $\mu$ and $\nu$ are finitely supported probability measures on $G$ and let $\pi$ be the monotone coupling between $\mu$ and $\nu$, with respect to $\ll$. Then, denoting   $\kappa_\pm=\pi\circ{T_\pm}^{-1}$, we have
	$$
	\sum_{(x,y)\in G\times G}\frac{\kappa_-(T_-(x,y))\kappa_+(T_+(x,y))}{\mu(x)\nu(y)}\,\pi(x,y)\le 1.
	$$
\end{prop}

\subsection{The structure of $\supp{\pi}$}

In this section we study the structure of the support of the monotone coupling $\pi$  , denoted by $\supp{\pi}$, under the monotone complementing  operations $T_{-}$ and $T_{+}$, given in Proposition \ref{prop:9}.



\begin{lem}
	\noindent \label{lem: 2.3} Suppose $\left(x_{1},y_{1}\right)\neq\left(x_{2},y_{2}\right)$,
	$x_{1}\preceq x_{2}, y_{1}\preceq y_{2}$ and $T_-\left(x_{1},y_{1}\right)=T_-\left(x_{2},y_{2}\right)$. Then, either $x_{1}=x_{2}$ or $y_{1}=y_{2}$. Moreover, we have  $$T_+(x_1,y_1)\prec T_+\left(x_{2},y_{2}\right)=T_+\left(x_{1},y_{1}\right)+[\left(x_{2}-x_{1}\right)+\left(y_{2}-y_{1}\right)].$$ 
\end{lem}

\begin{proof}
	Assume that $y_1\prec y_2$ and $x_1\prec x_2$. Then, denoting $a=\min\{x_2-x_1,y_2-y_1\}$, we have $$T_-(x_1,y_1)\prec a+T(x_1,y_1)= T_-(x_1+a,y_1+a)\ll T_-(x_2,y_2),$$ a contradiction. Thus, either $x_1=x_2$  or $y_1=y_2$. The relation $T_-(x,y)+T_+(x,y)=x+y$ implies that
	$T_+(x_2,y_2)=x_2+y_2-T_-(x_2,y_2)=x_2+y_2-(x_1+y_1-T_+(x_1,y_1)))$, as claimed.
\end{proof}
Note that Lemma \ref{lem: 2.3} and its proof both hold if one interchanges $T_-$ with $T_+$, due to the symmetry between them.\\

For $a\in G$, define $S_{\pm}\left(a\right)=\left\{ \left(x,y\right)\in\supp{\pi}\,;\,T_{\pm}\left(x,y\right)=a\right\}$.
\begin{lem}
	\noindent \label{lem:S_basic} For every $a\in G$ we have $S_-\left(a\right)=\left\{ \left(x_{0},y_{0}\right),\left(x_{1},y_{1}\right),\dots,\left(x_{k},y_{k}\right)\right\}$ where either $x_{0}=x_{1}=\dots=x_{k}$ and $y_{0}\prec y_{1}\prec\dots\prec y_{k}$ or  $x_{0}\prec x_{1}\prec\dots\prec x_{k}$ and $y_{0}=y_{1}=\dots=y_{k}$. \smallskip
	
	\noindent Similarly, we have $S_+\left(a\right)=\left\{ \left(x'_{0},y'_{0}\right),\left(x'_{1},y'_{1}\right),\dots,\left(x'_{m},y'_{m}\right)\right\}$ where either $x'_{0}=x'_{1}=\dots=x'_{m}$ and $y'_{0}\prec y'_{1}\prec\dots\prec y'_{m}$ or  $x'_{0}\prec x'_{1}\prec\dots\prec x'_{m}$ and $y'_{0}=y'_{1}=\dots=y'_{m}$.
	
\end{lem}

\begin{proof}
By the monotonicity of $\supp{\pi}$, we have 
	$S_-\left(a\right)=\left\{ \left(x_{0},y_{0}\right),\left(x_{1},y_{1}\right),\dots,\left(x_{k},y_{k}\right)\right\}$ where $x_{0}\ll\dots\ll x_{k}$ and $y_{0}\ll\dots\ll y_{k}$. Moreover, by Lemma \ref{lem: 2.3}, for every $i,j\in\left\{ 0\dots,k\right\} $ such that $i\neq j$,
	we have either $x_{i}=x_{j}$ or $y_{i}=y_{j}$, from which it follows that
	either $x_{0}=x_{1}=\dots=x_{k}$ or $y_{0}=y_{1}=\dots=y_{k}$. 
	
The second part of the lemma regarding $S_+(a)$ is proven exactly  in the same way. 


\end{proof}

\begin{lem}
	\label{lem:new2.4} Let $a\in G$ and $k\ge2$. Suppose   $S_-\left(a\right)=\left\{ \left(x_{0},y_{0}\right),\dots,\left(x_{k},y_{k}\right)\right\}$ with $x_{0}\ll\dots\ll x_{k}$ and $y_{0}\ll\dots\ll y_{k}$. Then, $S_+\left(T_+\left(x_{i},y_{i}\right)\right)=\left\{ \left(x_{i},y_{i}\right)\right\}$ for all $0<i<k$.
\end{lem}

\begin{proof}
By Lemma \ref{lem: 2.3}, $T_+(x_0,y_0)\prec\dots\prec T_+(x_k,y_k)$. Since $T_+$ is monotone, it  follows that $$T_+(x_0,y_0)\prec\dots\prec T_+(x_k,y_k)\ll T_+(x,y)$$ whenever $x_k\ll x \text{ and } y_k\ll y$. In particular, we have $(x,y)\not\in S_+(T_+(x_i,y_i))$ for all $x_k\ll x$, $y_k\ll y$ and  $0<i<k$. Similarly, one shows that  $(x,y)\not\in S_+(T_+(x_i,y_i))$ for all  $x\ll x_k$, $y\ll y_k$ and $0<i<k$.
Since $T_-$ is monotone, if 
$
x_0\ll x\ll x_k \text{ and } y_0\ll y\ll y_k$ then  $T_-(x,y)=a$, and so either $(x,y)=(x_i,y_i)$ for some $0\le i\le k$ or $(x,y)\not\in\supp{\pi}$. Therefore,  by the   monotonicity of $\supp{\pi}$, we have  $S_+(T+(x_i,y_i))=\{(x_i,y_i)\}$ for all $0<i<k$.
\end{proof}

\subsection{Proof of Proposition \ref{prop:9}}
Fix $a\in G$ for which $S_-(a)\neq\emptyset$. It is sufficient to show that 
\begin{equation}\label{eq:one}
\sum_{\left(x,y\right)\in S_{-}(a)}\frac{\kappa_{-}(a)\kappa_{+}\left(T_{+}\left(x,y\right)\right)}{\mu\left(x\right)\nu\left(y\right)}\pi\left(x,y\right)\le\sum_{\left(x,y\right)\in S_{-}(a)}\pi\left(x,y\right)=\kappa_-(a).
\end{equation}
By Lemma \ref{lem:new2.4}, we have $S_-(a)=\{(x_0,y_0),\dots, (x_k,y_k\}$ where either  $x_0=x_1=\dots=x_k$ and $y_0\prec y_1\prec\dots\prec y_k$ or  $x_0\prec x_1\prec \dots\prec x_k$ and $y_0=y_1=\dots=y_k$. By interchanging the first and
second coordinates, we may assume without loss of generality that the first possiblity holds.


Therefore,  \eqref{eq:one} is reduced to:
\begin{equation}\label{eq:two}
	\sum_{j=0}^k\frac{\kappa_{+}\left(T_{+}\left(x_0,y_j\right)\right)}{\nu\left(y_j\right)}\pi\left(x_0,y_j\right)\le\mu(x_0).
\end{equation}
 
By Lemma \ref{lem:S_basic}, we have
$
S_+(T_+(x_0,y_0))=\{(x_0,y_0),(x'_0,y'_0),\dots, (x'_l,y'_l)\}
$
such that either  $x'_0=\dots=x'_l=x_0$ or $y'_0=\dots=y'_l=y_0$. In particular, it follows that
\begin{equation}\label{eq:kappa1}
	\kappa_+(T_+(x_0,y_0))\le\max\{\mu(x_0),\nu(y_0))\}.
\end{equation}

Similarly, Lemma \ref{lem:S_basic} tells us that 
$
 S_+(T_+(x_0,y_k))=\{(x_0,y_k),(x''_0,y''_0),\dots, (x''_m,y''_m)\}
$
such that either $x''_0=\dots=x''_m=x_0$ or $y''_0=\dots=y''_m=y_k$. Note that, by lemma \ref{lem: 2.3}, the points 
$$(x_0,y_0),\dots,(x_0,y_k), (x'_0,y'_0),\dots,(x'_l,y'_l),(x''_0,y''_0),\dots,(x''_m,y''_m)
$$
are all distinct.\\



\noindent \textbf{Case 1: } $k=0$. Using \eqref{eq:kappa1} and the fact that $\pi(x_0,y_0)\le\min\{\mu(x_0),\nu(y_0)\}$ directly yields  \eqref{eq:two}.\\

\noindent \textbf{Case 2: } $k\ge1$. By Lemma \ref{lem:new2.4}, we have 
$S_+(T_+(x_0,y_i)))=\{(x_0,y_i)\}$ for all $0<i<k$, and hence $
\kappa_+(T_+(x_0,y_i))=\pi(x_0,y_i)\le\nu(y_i).$ we can thus rewrite \eqref{eq:two} as follows: 
\begin{equation}\label{eq:three}
\frac{\kappa_+(T_+(x_0,y_0)}{\nu(y_0)}\pi(x_0,y_0)+\sum_{i=1}^{k-1}\frac{\pi(x_0,y_i))}{\nu(y_i)}\pi(x_0,y_i)+\frac{\kappa_+(T_+(x_0,y_k))}{\nu(y_k)}\pi(x_0,y_k)\le \mu(x_0)
\end{equation}

We split the proof of \eqref{eq:three} into four simple subcases, as follows.\\

\noindent \textbf{Case 2.1: } $x'_0=\dots=x'_l=x_0$ and $x''_0=\dots=x''_m=x_0$. Since  $\pi(x,y)\le\nu(y)$ for all $x,y\in G$, it is enough to show that
$$
\kappa_+(T_+(x_0,y_0))+\sum_{i=1}^{k-1}\pi(x_0,y_i)+\kappa_+(T_+(x_0,y_k))\le \mu(x_0)
$$
which cleary holds as $\kappa_+(T_+(x_0,y_0))=\sum_{j=0}^l\pi(x_0,y'_j)$ and $\kappa_+(T_+(x_0,y_k))=\sum_{j=0}^m\pi(x_0,y''_j)$.\\

\noindent \textbf{Case 2.2: } $x'_0=\dots=x'_l=x_0$ and $y''_0=\dots=y''_m=y_0$. Since $\pi(x_0,y_0)\le\nu(y_0)$, 
$\pi(x_0,y_i)\ll\nu(y_i)$ for all $0<i<k$ and 
$$
\kappa_+(T_+(x_0,y_k))=\sum_{j=0}^m\pi(x''_j,y_k)\le\nu(y_k),$$
it is enough to show that 
$$
\kappa_+(T_+(x_0,y_0))+\sum_{i=1}^{k-1}\pi(x_0,y_i)+\pi(x_0,y_k))\le \mu(x_0)
$$
which clearly holds as $\kappa_+(T_+(x_0,y_0))=\sum_{j=0}^l\pi(x_0,y'_j)$.\\

\noindent \textbf{Case 2.3: } $y'_0=\dots=y'_l=y_0$ and $x''_0=\dots=x''_m=x_0$. Since $$\kappa_+(T_+(x_0,y_0))=\sum_{j=0}^l\pi(x'_j,y_0)\le\nu(y_0),
$$
$\pi(x_0,y_i)\ll\nu(y_i)$ for all $0<i<k$ and $\pi(x_0,y_k)\le\nu(y_k)$, it is enough to show that 
$$
\pi(x_0,y_0))+\sum_{i=1}^{k-1}\pi(x_0,y_i)+\kappa_+(T_+(x_0,y_k))\le \mu(x_0)
$$
which clearly holds as $\kappa_+(T_+(x_0,y_k))=\sum_{j=0}^m\pi(x_0,y''_j)$.\\

\noindent \textbf{Case 2.4: } $y'_0=\dots=y'_l=y_0$ and $y''_0=\dots=y''_m=y_0$. Since $$\kappa_+(T_+(x_0,y_0))=\sum_{j=0}^\pi(x'_j,y_0)\le\nu(y_0),
$$
$\pi(x_0,y_i)\ll\nu(y_i)$ for all $0<i<k$ and $\kappa_+(T_+(x_0,y_k))=\sum_{j=0}^m\pi(x''_j,y_k)\le\nu(y_k)$, it is enough to prove that 
$$
\pi(x_0,y_0)+\sum_{i=1}^{k-1}\pi(x_0,y_i)+\pi(x_0,y_k)\le\mu(x_0)
$$
which clearly holds. This establishes \eqref{eq:three} and completes the proof.\qed.

\section{The general case}
\subsection{The Knothe coupling}\label{sec:Knothe}
Denote by $\ZZ^n=(G_i,\ll_i)_{1:k}$ the decomposition of $\ZZ^n$ into a direct sum of groups, $G_1,\dots,G_k$, each of which equipped with a total additive ordering $\ll_i$. 
We shall next construct the Knothe coupling $\pi$ between two finitely  supported measures $\mu$ and $\nu$ on $\ZZ^n$ with respect to its given decomposition. 

To that end, the following notation shall be useful. For $(x_1,\dots,x_k)\in (G_1,\dots,G_k)$ and each $i\in\{1,\dots,k\}$ denote by $x_{1:i}$ the sub-vector $(x_1,\dots,x_i)\in (G_1,\dots, G_i)$. Consider the disintegration formula for a measure $\kappa$ on $\ZZ^n$ with  respect to the given decomposition:
$$
\kappa(x_1,\dots,x_k)=\kappa^1(x_1)\kappa^2(x_2|x_1)\dots\kappa^k(x_k|x_{1:k-1}),
$$
where $\kappa^1$ is the marginal of $\kappa$ onto $G_1$, $\kappa^2(\cdot\,|\, x_1)$  is the marginal of $\kappa(\cdot \,|\, x_1)$ onto $G_2$ and etc. 

The Knothe coupling between $\mu$ and $\nu$ with respect to this decomposition is defined by
$$\pi(x,y)=\pi^1(x_1,y_1) \pi^2(x_2,y_2 \,|\, x_1,y_1)\dots\pi^k(x_k,y_k \,|\, x_{1:k-1},y_{1:k-1})$$
where $\pi^i(\cdot,\cdot\,|\,x_{1:i-1},y_{1:i-1})$ is the monotone coupling between $\mu^i(\cdot\,|\,x_{1:i-1})$ and $\nu^i(\cdot\,|\,y_{1:i-1})$.
\subsection{Proof of Theorem \ref{thm:Knothe_9}}
Recall that since $T_\pm:\ZZ^n\times \ZZ^n\to \ZZ^n$ are monotone in the sense of Knothe with respect to the decomposition $(G_i,\ll_i)_{1:k}$, we have
$$T_\pm(x,y)=(T_\pm^1(x_1,y_1), T_\pm^2(x_1,x_2,y_1,y_2)\dots ,T_\pm^k(x_{1:k-1},y_{1:k-1}))$$
for all $x=(x_1,\dots,x_k), y=(y_1,\dots,y_k)\in (G_1,\dots,G_k)$. Moreover, for each $i\in\{1,\dots,k\}$,  the operations $(T_\pm^i)^{(x_{1:i-1},y_{1:i-1})}:G_i\times G_i\to G_i$, defined by $$(T_\pm^i)^{(x_{1:i-1},y_{1:i-1})}(x_i,y_i)=T_\pm^i(x_{1:i},y_{1:i})$$
for all $ x_{1:i-1},y_{1:i-1}\in G_1\times\dots\times G_{i-1}$, are increasing in each of their two entries.

Let $\pi$ be the Knothe coupling between $\mu$ and $\nu$ with respect to the same decomposition 
$\ZZ^n=(G_i,\ll_i)_{1:k}$, and recall that  $\kappa_\pm=\pi\circ {T_\pm}^{-1}$.
Then
\begin{align*}
P:&=\sum_{(x,y)\in \ZZ^n\times \ZZ^n}\frac{\kappa_-(T_-(x,y))\kappa_+(T_+(x,y))}{\mu(x)\nu(y)}\,\pi(x,y)\\
&=\sum_{(x_1,y_1)\in G_1\times G_1}\sum_{(x_2,y_2)\in G_2\times G_2}\dots\sum_{(x_k,y_k)\in G_k\times G_k}\frac{\kappa_-(T_-(x,y))\kappa_+(T_+(x,y))}{\mu(x)\nu(y)}\pi(x,y).
\end{align*}
Using the disintegration of  $\kappa_\pm$ and the fact that $T_\pm$ are triangular with respect to the given decomposition of $\ZZ^n$, we have 
$$
\kappa_\pm(T_\pm(x,y))=\kappa^1_\pm(T^1_\pm(x_1,y_1))\kappa_\pm^2(T^2_\pm(x_{2},y_{2})|x_1,y_1)\dots\kappa_\pm^k(T^k_\pm(x_k,y_k)|x_{1:k-1},y_{1:k-1})
$$
where, for brevity, the expression $T^i_\pm(x_i,y_i)$  within $\kappa_\pm^i(T^i_\pm(x_i,y_i)|x_{1:i-1},y_{1:i-1})$  is understood as $(T^i_\pm)^{(x_{1:i-1},y_{1:i-1})}(x_i,y_i)$. Combined with the disintegration of $\mu$, $\nu$, and $\pi$ with respect to the given decomposition of $\ZZ^n$, we obtain that
$$
P=\sum_{(x_1,y_1)\in G_1\times G_1}A_1(x_1,y_1)\sum_{(x_2,y_2)\in G_2\times G_2}A^{(x_1,y_1)}_2(x_2,y_2)\,\,\,\dots\sum_{(x_k,y_k)\in G_k\times G_k}A^{(x_{1:k-1},y_{1:k-1})}_k(x_k,y_k),
$$
where $A^{(x_{1:i-1},y_{1:i-1})}_i(x_i,y_i)$ is given by
\begin{align*}
\frac{\Bp{\kappa_-^i(T^i_-(x_i,y_i)|x_{1:i-1},y_{1:i-1})}\Bp{\kappa_+^i(T^i_+(x_i,y_i)|x_{1:i-1},y_{1:i-1})}}{\Bp{\mu^i(x_i\,|\,x_{1:i-1})}\Bp{\nu^i(y_i|y_{1:i-1})}}\pi^i(x_i,y_i\,|\,x_{1:i-1},y_{1:i-1}).
\end{align*}

Finally, we  apply Proposition \ref{prop:9} iteratively to each sum separately to obtain that 
$$
\sum_{(x_i,y_i)\in G_i}A^{(x_{1:i-1},y_{1:i-1})}_i(x_i,y_i)\le 1
$$
for all $i\in\{1,\dots,k\}$ and $x_{1:i-1},y_{1:i-1}\in G_1\times\dots\times G_{i-1}$. This completes the proof.\qed

\subsection{Proof of Theorem \ref{thm:Knothe}}\label{sec:9to8}
Let $\pi$ be the Knothe coupling between $\mu$ and $\nu$,  given in Theorem \ref{thm:Knothe_9}. By Jensen's inequality, applied to the logarithm function, Theorem \ref{thm:Knothe_9} implies that 
$$
H:=	\sum_{(x,y)\in \ZZ^n\times\ZZ^n}\log\left(\frac{\kappa_-(T_-(x,y))\kappa_+(T_+(x,y))}{\mu(x)\nu(y)}\right)\pi(x,y)\le 0.
$$
By the definition of $\pi$, $\kappa_-$ and $\kappa_+$, it follows that
\begin{align*}
\sum_{z\in\ZZ^n}\log(\kappa_-(z))\kappa_-(z)+\sum_{z\in \ZZ^n}\log(\kappa_+(z))\kappa_+(z)-\sum_{z\in \ZZ^n}\log(\mu(z))\mu(z)-\sum_{z\in \ZZ^n}\log(\nu(z))\nu(z)\le0,
\end{align*}
or, equivalently, that
$H(\kappa_-|m_n)+ H(\kappa_+|m_n)- H(\mu|m_n)- H(\nu|m_n)\le0$. \qed
\section{Proof of Theorem \ref{thm:strongDBM}}\label{sec:8toBM}
As in \cite{Gozlan2019}, we use the log-Laplace transform of any bounded function $\varphi$:
\begin{equation}\label{eq:log-laplace}
\log \int e^\varphi\,dm_n=\sup_\nu\{\int \varphi\,d\nu-H(\nu|m_n)\}.
\end{equation}
Let $f,g,h,k$ satisfy 
\begin{equation}\label{eq:fghk}
f^\alpha(x)g^\beta(y)\le h^\gamma(T_-(x,y))k^\delta(T_+(x,y))\quad \forall x,y\in \ZZ^n.
\end{equation} 
If either $h$ or $k$ is not bounded from above then the statement holds trivially. Otherwise, it follows from \eqref{eq:fghk} that $f,g,h,k$ are all bounded from above. Given $\eps>0$ and setting
$f_{\eps}=\max(\eps,f(x))$, one may check that the above inequality is equivalent to 
$$
\alpha\log f_{\eps}(x)+\beta\log g_{\eps}(y)\le \gamma\log h_{\eps}(T_-(x,y))+\delta\log k_{\eps}(T_+(x,y)).
$$
Integrating this inequality with respect to the Knothe  coupling $\pi$ between  finitely supported probability measures $\mu,\nu$ on $\ZZ^n$, as given in Theorem \ref{thm:Knothe}, we have
\begin{align*}
\alpha\int\log f_\eps d\mu+\beta\int\log g_\eps d\nu&\le\gamma\int\log (h_\eps\circ T_-)d\pi+\delta\int\log(k_\eps\circ T_+)d\pi\\
&=\int \gamma\log h_\eps d\kappa_-+\delta\int\log k_\eps d\kappa_+,
\end{align*}
where $\kappa_\pm=\pi\circ{T_\pm}^{-1}$. Applying Theorem \ref{thm:Knothe} and \eqref{eq:log-laplace} we thus get
\begin{align*}
\alpha\Bp{\int \log f_\eps d\mu&-H(\mu|m_n)}+\beta\Bp{\int\log g_\eps d\nu -H(\nu|m_n)}\\
&\le \gamma\Bp{\int\log h_\eps d\kappa_--H(\kappa_-|m_n)}+\delta\Bp{\int\log k_\eps d\kappa_+-H(\kappa_+|m_n)}\\ 
&\le \gamma\log \int h_\eps dm_n+\delta\log \int k_\eps dm_n.
\end{align*}
Optimizing over all $\mu$ and $\nu$, we get 
$$
\alpha\log \int f_\eps dm_n+\beta\log \int g_\eps dm_n\le \gamma\log \int h_\eps dm_n+\delta\log \int k_\eps dm_n.
$$
We conclude the proof by taking $\eps\to0$ and applying the monotone convergence theorem.\qed

\bibliographystyle{amsplain_Abr_NoDash}
\bibliography{DBM}
\end{document}